% ------------------------------------------------------------------------
% bjourdoc.tex for birkjour.cls*******************************************
% ------------------------------------------------------------------------
%%%%%%%%%%%%%%%%%%%%%%%%%%%%%%%%%%%%%%%%%%%%%%%%%%%%%%%%%%%%%%%%%%%%%%%%%%

\documentclass{birkjour}
\usepackage{amsthm}
\usepackage{amsfonts}
\usepackage{mathrsfs}
\usepackage{amscd,xy,amssymb,amsmath,graphicx,verbatim}
\usepackage{amssymb, color}
\usepackage[bookmarksnumbered, colorlinks, plainpages]{hyperref}
%\usepackage{refcheck}

%\usepackage[TS1,OT1,T1]{fontenc}
% THEOREM Environments (Examples)-----------------------------------------
%
\newtheorem{theorem}{Theorem}[section]
\newtheorem{lemma}[theorem]{Lemma}
\newtheorem{corollary}[theorem]{Corollary}

\newtheorem{definition}[theorem]{Definition}
\newtheorem{example}[theorem]{Example}

\theoremstyle{remark}

\numberwithin{equation}{section}

\begin{document}

%-------------------------------------------------------------------------
% editorial commands: to be inserted by the editorial office
%
%\firstpage{1} \volume{228} \Copyrightyear{2004} \DOI{003-0001}
%
%
%\seriesextra{Just an add-on}
%\seriesextraline{This is the Concrete Title of this Book\br H.E. R and S.T.C. W, Eds.}
%
% for journals:
%
%\firstpage{1}
%\issuenumber{1}
%\Volumeandyear{1 (2004)}
%\Copyrightyear{2004}
%\DOI{003-xxxx-y}
%\Signet
%\commby{inhouse}
%\submitted{March 14, 2003}
%\received{March 16, 2000}
%\revised{June 1, 2000}
%\accepted{July 22, 2000}
%
%
%
%---------------------------------------------------------------------------
%Insert here the title, affiliations and abstract:
%

%-------------------------- Pleased do not change the following line-------------------------------------------

%--------------------------------------------------------------------------------------------------------------
\title[Cowen-Douglas function and its application on chaos]{Cowen-Douglas function and its application on chaos}
\author[Lvlin Luo]{Lvlin Luo}
\address{School of Mathematics and Statistics, Xidian University, 710071, Xi'an, P. R. China}
\address{School of Mathematical Sciences, Fudan University, 200433, Shanghai, P. R. China}
\email{luoll12@mails.jlu.edu.cn}
\email{luolvlin@fudan.edu.cn}

%\thanks{Supported by the National Natural Science Foundation of China (Grant No. 11626180)}

%\dedicatory{This paper3 is dedicated to Professor ABCD}

\subjclass{Primary 30J99,37B99, 47A65,; Secondary }

\keywords{chaos; Hardy space; rooter function; Cowen-Douglas function.}

%\date{Received: xxxxxx; Revised: yyyyyy; Accepted: zzzzzz.}
%\newline \indent $^{*}$ Corresponding author}

\begin{abstract}
In this paper,
on $\mathbb{D}$ we define Cowen-Douglas function introduced by Cowen-Douglas operator $M_\phi^*$ on Hardy space $\mathcal{H}^2(\mathbb{D})$ and we give a sufficient condition for Cowen-Douglas function,
where $\phi\in\mathcal{H}^\infty(\mathbb{D})$.
Moreover, we give some applications of Cowen-Douglas function on chaos,
such as application on the inverse chaos problem for $\phi(T)$,
where $\phi$ is a Cowen-Douglas function and $T$ is the backward shift operator on $\mathcal{L}^2(\mathbb{N})$.
\end{abstract}
\maketitle
%%%%%%%%%%%%%%%%%%%%%%%%%%%%%%%%%%%%%%%%%%%%%%%%%%%%%%%%%%%%%%%%%%%%%%%%%%%%%%%%%%%%%%%%%%%%%%%%%%

\section{Introduction}

If $X$ is a metric space and $T$ is a continuous self-map on $X$,
then the pair $(X,T)$ is called a topological dynamic systems, which is induced by the iteration
$$T^n=\underbrace{T\circ\cdots\circ T}\limits_{n},\qquad n\in\mathbb{N}.$$

If the metric space $X$ and the continuous self-map $T$ are both linear,
then the topological dynamic systems $(X,T)$ is named linear dynamic.
Especially,
if $T$ is a continuous invertible self-map on $X$,
then $(X,T)$ is named invertible dynamic.
Similarly, for a Banach space $\mathbb{B}$,
if $T$ is a invertible bounded linear operator on $\mathbb{B}$,
then the invertible dynamic $(X,T)$ is named invertible linear dynamic.
Also, to study the chaos theory of invertible linear dynamic we named that is inverse chaos problem.

For invertible dynamic,
the relationship between $(X,f)$ and $(X,f^{-1})$ is raised by Stockman as an open question\cite{DStockman2012}.
And \cite{LuoLvlinHouBingzhe2015,HouBingzheLuoLvlin2016} and \cite{LuoLvlinHouBingzhe2016} give counter examples for Li-Yorke chaos on noncompact space and compact space, respectively.
However, there is no generally way for this research.

Hardy space $\mathcal{H}^2(\mathbb{D})$ is a special Hilbert space that is relatively nice,
not only for theoretical but also for computing through complex analytic functions.
Because of any separable infinite-dimension Hilbert space are isomorphic,
to construct some symbol operators on Hardy space to study the inverse chaos problem may be a good idea.

In this paper,
on $\mathbb{D}$ we define Cowen-Douglas function introduced by Cowen-Douglas operator $M_\phi^*$ on Hardy space $\mathcal{H}^2(\mathbb{D})$ and we give a sufficient condition for Cowen-Douglas function,
where $\phi\in\mathcal{H}^\infty(\mathbb{D})$.

In the last, we give some applications of Cowen-Douglas function on chaos,
such as application on the inverse chaos problem for $\phi(T)$,
where $\phi$ is a Cowen-Douglas function and $T$ is the backward shift operator on $\mathcal{L}^2(\mathbb{N})$,
i.e.,
$$T(x_1,x_2,\cdots)=(x_2,x_3,\cdots).$$

\section{Some properties of Hardy space and chaos}

Firstly, we give some properties of Hardy space.
For $\mathbb{D}=\{z\in\mathbb{C},|z|<1\}$,
if $g$ is a complex analytic function on $\mathbb{D}$ and there is
$$
\sup\limits_{r<1}\int_{-\pi}^{\pi}|g(re^{i\theta})|^2\,d\theta<+\infty,
$$
then we denote $g\in\mathcal{H}^2(\mathbb{D})$.
Obviously,
$\mathcal{H}^2(\mathbb{D})$ is a Hilbert space with the norm
$$
\|g\|_{\mathcal{H}^2}^2=\sup\limits_{r<1}\int_{-\pi}^{\pi}|g(re^{i\theta})|^2\,\frac{d\theta}{2\pi},
$$
and $\mathcal{H}^2(\mathbb{D})$ is denoted as Hardy space.

For any given complex analytic function $g$,
we get the Taylor expansion
$$
g(z)=\sum\limits_{n=0}^{+\infty}a_nz^n,
$$
so the relation between $g\in\mathcal{H}^2(\mathbb{D})$ and $\sum\limits_{n=0}^{+\infty}a_n^2<+\infty$ are naturally equivalent.

Let $\mathbb{T}=\partial\mathbb{D}$,
if $\mathcal{H}^2(\mathbb{T})$ denoted the closed span of Taylor expansions of functions in $\mathcal{L}^2(\mathbb{T})$,
then $\mathcal{H}^{2}(\mathbb{T})$ is a closed subspace of $\mathcal{L}^2(\mathbb{T})$.
From the naturally isomorphic between $\mathcal{H}^{2}(\mathbb{D})$ and $\mathcal{H}^{2}(\mathbb{T})$ by the properties of analytic function,
we denote $\mathcal{H}^{2}(\mathbb{T})$ also as Hardy space.

Let $\mathcal{H}^{\infty}(\mathbb{D})$ denote the set of all bounded complex analytic functions on $\mathbb{D}$,
and $\mathcal{H}^{\infty}(\mathbb{T})$ denote the closed span of Taylor expansions of functions in $\mathcal{L}^{\infty}(\mathbb{T})$.
Then we get that $\mathcal{H}^{\infty}(\mathbb{D})$ and $\mathcal{H}^{\infty}(\mathbb{T})$ are naturally isomorphic by
the properties of complex analytic functions \cite{HenriCartanyujiarong2008}P55P97 associated with the Dirichlet Problem
\cite{HenriCartanyujiarong2008}P103.

For any given $\phi\in\mathcal{H}^{\infty}(\mathbb{D})$,
it is easy to get that $\|\phi\|_{\infty}=\sup\{|\phi(z)|;|z|<1\}$ is a norm on $\mathcal{H}^{\infty}(\mathbb{D})$.
And for any given $g\in\mathcal{H}^{2}(\mathbb{D})$,
the multiplication operator $M_{\phi}(g)=\phi g$ associated with $\phi\in\mathcal{H}^{\infty}(\mathbb{D})$ is a bounded linear operator,
and on $\mathcal{H}^{2}(\mathbb{D})$ we get

$$
\|M_{\phi}(g)\|\leq\|\phi\|_{\infty}\|g\|_{\mathcal{H}^2}.
$$

By \cite{JohnBConway1990}P6 we get that any Cauchy sequence with the norm $\|\cdot\|_{\mathcal{H}^2}$ on $\mathcal{H}^2(\mathbb{D})$ is a uniformly Cauchy sequence on any closed disk in $\mathbb{D}$,
in particular, we get that the point evaluations $f\to f(z)$ are continuous linear functional on $\mathcal{H}^2(\mathbb{D})$,
by the Riesz Representation \cite{JohnBConway1990}P13,
for any $g(s)\in\mathcal{H}^2(\mathbb{D})$,
there is a unique $f_z(s)\in\mathcal{H}^2(\mathbb{D})$ such that

$$g(z)=<g(s),f_z(s)>,$$

and then define this $f_z$ is a reproducing kernel at $z\in\mathbb{D}$.
There are more properties about Hardy space in
\cite{FBayartEMatheron2009}P7,
\cite{JohnBGarnett2007}P48,
\cite{KennethHoffman1962}P39,
\cite{RonaldGDouglas1998}P133 and
\cite{WilliamArveson2002}P106.

For prepare, we give some definition and properties of chaos on Hilbert space
$\mathbb{H}$,
let $\mathcal{B}(\mathbb{H})$ denote the set of all bounded linear operator on $\mathbb{H}$.

\begin{definition}\label{liyorkehundundedingyi1}
Let $T\in\mathcal{B}(\mathbb{H})$,
if there exists $x\in\mathbb{H}$ satisfies:

\begin{eqnarray*}
&&(1)\varlimsup\limits_{n\to\infty}\|T^{n}(x)\|>0, \\
&&(2)\varliminf\limits_{n\to\infty}\|T^{n}(x)\|=0,
\end{eqnarray*}

then we say that $T$ is Li-Yorke chaotic,
and named $x$ is a Li-Yorke chaotic point of $T$, where $x\in\mathbb{H},n\in\mathbb{N}$.
\end{definition}

\begin{definition}\label{cowendouglassuanzidingyi2}
For a connected open subset $\Omega\subset\mathbb{C}$,
$n\in\mathbb{N}$,
let $\mathcal{B}_n(\Omega)$ denote the set of all bounded linear operator $T$ on $\mathbb{H}$ that satisfies:

(1) $\Omega\in\sigma(T)=\{\omega\in\mathbb{C}:T-\omega \text{ is not invertible}\}$;

(2) $ran(T-\omega)=\mathbb{H}$ for $\omega\in\Omega$;

(3) $\bigvee\ker\limits_{\omega\in\Omega}(T-\omega)=\mathbb{H}$;

(4) $\dim\ker(T-\omega)=n$ for $\omega\in\Omega$.

If $T\in\mathcal{B}_n(\Omega)$, then we say that $T$ is a Cowen-Douglas operator.
\end{definition}

\begin{theorem}[\cite{BHouPCuiYCao2010}]\label{cowendouglassuanzidingyidevaneyliyrokefenbuqianghunhedingli2}
For a connected open subset $\Omega\subset\mathbb{C}$,
$T\in\mathcal{B}_n(\Omega)$,
we get

(1) If $\Omega\bigcap\mathbb{T}\neq\emptyset$,
then $T$ is Devaney chaotic.

(2) If $\Omega\bigcap\mathbb{T}\neq\emptyset$,
then $T$ is distributionally chaotic.

(3) If $\Omega\bigcap\mathbb{T}\neq\emptyset$,
then $T$ strong mixing.
\end{theorem}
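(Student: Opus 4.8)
The plan is to reduce the three chaos conclusions to a single structural fact: if $T\in\mathcal{B}_n(\Omega)$ and $\Omega\cap\mathbb{T}\neq\emptyset$, then $T$ has a rich supply of eigenvectors with eigenvalues of modulus both less than and greater than $1$, and these eigenvectors depend analytically on the eigenvalue parameter. First I would fix a point $\omega_0\in\Omega\cap\mathbb{T}$ and, using that $\Omega$ is open and connected, choose small open disks $\Omega_{-}\subset\Omega\cap\mathbb{D}$ and $\Omega_{+}\subset\Omega\setminus\overline{\mathbb{D}}$ accumulating at $\omega_0$; both are nonempty since $\Omega$ meets $\mathbb{T}$ and is open. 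From the Cowen–Douglas axioms (2)–(4), for each $\omega\in\Omega$ the kernel $\ker(T-\omega)$ is $n$-dimensional, and a standard fact about $\mathcal{B}_n(\Omega)$ is that one can select a holomorphic frame $\omega\mapsto(e_1(\omega),\dots,e_n(\omega))$ of these kernels on any simply connected subregion. The key consequence I want is: for every $\omega\in\Omega_{-}$ there is a nonzero $x_\omega$ with $Tx_\omega=\omega x_\omega$, $|\omega|<1$, hence $\|T^k x_\omega\|=|\omega|^k\|x_\omega\|\to0$; and symmetrically eigenvectors for $\omega\in\Omega_{+}$ give $\|T^k x_\omega\|\to\infty$.

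Next I would invoke the Godefroy–Shapiro / eigenvalue criterion for hypercyclicity and mixing: if the closed linear span of the eigenvectors of $T$ with eigenvalues in $\mathbb{D}$ is dense, and likewise the span of those with eigenvalues of modulus $>1$ is dense, then $T$ is hypercyclic and in fact topologically mixing; and if additionally the span of eigenvectors with eigenvalues that are roots of unity (equivalently, with unimodular eigenvalues supported on an arc) is dense, then $T$ is Devaney chaotic. Density of these spans follows from axiom (3), $\bigvee_{\omega\in\Omega}\ker(T-\omega)=\mathbb{H}$, together with an analyticity argument: if a functional $\psi\in\mathbb{H}^*$ annihilates all eigenvectors over $\Omega_{-}$, then $\omega\mapsto\langle e_j(\omega),\psi\rangle$ is a holomorphic function on the connected set $\Omega$ vanishing on the open subset $\Omega_{-}$, hence vanishes on all of $\Omega$, forcing $\psi$ to annihilate $\bigvee_{\omega\in\Omega}\ker(T-\omega)=\mathbb{H}$, so $\psi=0$. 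The same argument gives density of the span over $\Omega_{+}$ and over any subarc of $\Omega\cap\mathbb{T}$. For distributional chaos one argues similarly: the existence of a dense set of vectors $x$ with $T^k x\to0$ (the span over $\Omega_{-}$) and a dense set of vectors with $\limsup_k\|T^k x\|=\infty$ (the span over $\Omega_{+}$) yields, via the Bayart–Matheron distributional chaos criterion, a distributionally irregular vector and hence distributional chaos.

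The main obstacle I anticipate is establishing the holomorphic frame and the analytic-dependence step rigorously rather than citing it: one must show that the eigenvectors $e_j(\omega)$ can be chosen to vary holomorphically and that completeness over all of $\Omega$ propagates completeness over the subregions $\Omega_{\pm}$ and over arcs — this is exactly where the connectedness of $\Omega$ and the identity theorem for vector-valued holomorphic functions do the work. Everything else (passing from "dense eigenvector spans on both sides of $\mathbb{T}$" to Devaney chaos, strong mixing, and distributional chaos) is an application of the now-standard eigenvalue criteria, which I would cite from \cite{FBayartEMatheron2009} and \cite{BHouPCuiYCao2010}. I would therefore organize the proof as: (i) set up $\omega_0$, $\Omega_{-}$, $\Omega_{+}$ and the holomorphic frame; (ii) prove the three density statements by the identity-theorem argument; (iii) quote the eigenvalue criterion to conclude hypercyclicity $+$ periodic points (Devaney), strong mixing, and distributional chaos in turn.
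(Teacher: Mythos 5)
This theorem is imported verbatim from \cite{BHouPCuiYCao2010} and the paper offers no proof of it, so there is nothing internal to compare you against; the relevant comparison is with the cited source, whose strategy your outline essentially reproduces: local holomorphic cross-sections of the eigenvector bundle $\omega\mapsto\ker(T-\omega)$, propagation of the density in axiom (3) to the eigenvector spans over $\Omega\cap\mathbb{D}$, over $\Omega\setminus\overline{\mathbb{D}}$, and over the root-of-unity eigenvalues by the identity theorem (noting that the roots of unity in the arc $\Omega\cap\mathbb{T}$ accumulate inside $\Omega$, so vanishing there already forces vanishing on $\Omega$), followed by the Godefroy--Shapiro eigenvalue criterion for hypercyclicity, topological mixing, and Devaney chaos. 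That portion of the plan is sound, granted that the existence of the holomorphic frame is a citable piece of Cowen--Douglas theory rather than something you establish.

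The step I would not accept as written is the distributional chaos step. It is not a theorem that a dense set of vectors with $T^kx\to0$ together with a dense set of vectors with $\limsup_k\|T^kx\|=\infty$ yields a distributionally irregular vector: that pair of hypotheses gives Li--Yorke chaos, but distributional irregularity requires the ``small'' and ``large'' time sets to have upper density one, and an unbounded orbit may be large only along a density-zero sequence of times. You need the actual Distributional Chaos Criterion of \cite{BermudezBonillaMartinezGimenezPeiris2011} (or the direct construction of a distributionally scrambled set as in \cite{BHouPCuiYCao2010}), and you must verify its density conditions. The eigenvectors rescue you here: for $Tx_\omega=\omega x_\omega$ with $|\omega|>1$ one has $\|T^kx_\omega\|=|\omega|^k\|x_\omega\|$ growing geometrically, hence exceeding any threshold on a cofinite (full-density) set, while for $|\omega|<1$ the orbit tends to zero along the full sequence; feeding these into the criterion, with the dense span over $\Omega\cap\mathbb{D}$ as the null set and suitably rescaled eigenvectors from $\Omega\setminus\overline{\mathbb{D}}$ as the ``large'' vectors, closes the gap. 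State and use that criterion explicitly rather than the false general implication.
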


\section{Cowen-Douglas function on Hardy space}

In this part,
we give the definition of rooter function for a given $m$-folder complex analytic function on $\mathbb{D}$.
Then we define Cowen-Douglas function,
and we prove that if the rooter function of a $m$-folder complex analytic function is an outer function,
then the $m$-folder complex analytic function is a Cowen-Douglas function.

\begin{definition}[\cite{RonaldGDouglas1998}P141]\label{waihanshudingyi46}
Let $\mathcal{P}(z)$ be the set of all polynomials about $z$,
where $z\in\mathbb{T}$.
Define a function $h(z)\in\mathcal{H}^2(\mathbb{T})$ is an outer function if
$$cl[h(z)\mathcal{P}(z)]=\mathcal{H}^2(\mathbb{T}).$$
\end{definition}

\begin{lemma}[\cite{RonaldGDouglas1998}P141]\label{waihanshukeyidexingzhi49}
A function $h(z)\in\mathcal{H}^{\infty}(\mathbb{T})$ is invertible on the Banach algebra $\mathcal{H}^{\infty}(\mathbb{T})$,
if and only if
$h(z)\in\mathcal{L}^{\infty}(\mathbb{T})$
and $h(z)$ is an outer function.
\end{lemma}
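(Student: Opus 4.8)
The plan is to prove the two implications separately, using three ingredients: polynomials are dense in $\mathcal{H}^2(\mathbb{T})$ (by the definition of $\mathcal{H}^2(\mathbb{T})$ as the closed span of the monomials in $\mathcal{L}^2(\mathbb{T})$); for $\psi\in\mathcal{L}^\infty(\mathbb{T})$ the multiplication operator $M_\psi$ is bounded on $\mathcal{L}^2(\mathbb{T})$ with norm $\|\psi\|_\infty$, and $M_h$ carries $\mathcal{H}^2(\mathbb{T})$ into itself when $h\in\mathcal{H}^\infty(\mathbb{T})$ (already noted earlier in the paper); and the identification $\mathcal{H}^\infty(\mathbb{T})=\mathcal{H}^2(\mathbb{T})\cap\mathcal{L}^\infty(\mathbb{T})$, which follows from boundary-value theory for bounded analytic functions, via the Poisson integral / Dirichlet problem circle of ideas cited in the introduction. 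I read the hypothesis ``$h\in\mathcal{L}^\infty(\mathbb{T})$'' as saying that $h$ is invertible in $\mathcal{L}^\infty(\mathbb{T})$, i.e. $1/h\in\mathcal{L}^\infty(\mathbb{T})$.

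\emph{Necessity.} Suppose $hg=1$ with $g\in\mathcal{H}^\infty(\mathbb{T})$. Then $1/h=g\in\mathcal{H}^\infty(\mathbb{T})\subseteq\mathcal{L}^\infty(\mathbb{T})$, which is the first condition. For the outer property, choose polynomials $p_n\to g$ in $\mathcal{H}^2(\mathbb{T})$; since $M_h$ is bounded, $hp_n\to hg=1$, so $1\in cl[h\mathcal{P}(z)]$, and applying the bounded operators $M_{z^k}$ gives $z^k\in cl[h\mathcal{P}(z)]$ for every $k\ge 0$. Hence $cl[h\mathcal{P}(z)]$ is a closed subspace containing all polynomials, so it equals $\mathcal{H}^2(\mathbb{T})$, i.e. $h$ is outer.

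\emph{Sufficiency.} Suppose $1/h\in\mathcal{L}^\infty(\mathbb{T})$ and $h$ is outer, and set $g:=1/h$. Since $1\in\mathcal{H}^2(\mathbb{T})=cl[h\mathcal{P}(z)]$, there are polynomials $p_n$ with $hp_n\to 1$ in $\mathcal{H}^2(\mathbb{T})$. Applying $M_g$, which is bounded on $\mathcal{L}^2(\mathbb{T})$, and using $gh=1$ a.e., we get $p_n=M_g(hp_n)\to M_g(1)=g$ in $\mathcal{L}^2(\mathbb{T})$; as $\mathcal{H}^2(\mathbb{T})$ is closed in $\mathcal{L}^2(\mathbb{T})$ and contains each $p_n$, it follows that $g\in\mathcal{H}^2(\mathbb{T})$. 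Combined with $g\in\mathcal{L}^\infty(\mathbb{T})$ this yields $g\in\mathcal{H}^\infty(\mathbb{T})$, and $hg=1$ shows that $h$ is invertible in $\mathcal{H}^\infty(\mathbb{T})$.

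The routine parts---density of polynomials and the norm of a multiplication operator---are immediate. The one genuinely substantive point, and the step I expect to be the main obstacle, is the identification $\mathcal{H}^\infty(\mathbb{T})=\mathcal{H}^2(\mathbb{T})\cap\mathcal{L}^\infty(\mathbb{T})$ used at the end of the sufficiency argument: this is where analyticity, and not merely $\mathcal{L}^2$-closedness of $\mathcal{H}^2(\mathbb{T})$, must be invoked, through the boundary-value correspondence between $\mathcal{H}^\infty(\mathbb{D})$ and $\mathcal{H}^\infty(\mathbb{T})$. Everything else is bookkeeping with the boundedness of $M_h$ and $M_{1/h}$ on $\mathcal{L}^2(\mathbb{T})$.
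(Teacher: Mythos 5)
The paper offers no proof of this lemma at all---it is quoted verbatim from \cite{RonaldGDouglas1998} p.~141---and your argument is essentially the standard proof given in that reference, so there is nothing in the paper to compare it against beyond the citation. Your reading of the hypothesis ``$h(z)\in\mathcal{L}^{\infty}(\mathbb{T})$'' as ``$h$ is invertible in $\mathcal{L}^{\infty}(\mathbb{T})$'' is the correct repair of the paper's misstatement (as literally written the condition is vacuous, since $\mathcal{H}^{\infty}(\mathbb{T})\subseteq\mathcal{L}^{\infty}(\mathbb{T})$), and both implications---deducing outerness from $hp_n\to 1$ together with invariance of $cl[h\mathcal{P}(z)]$ under the $M_{z^k}$, and recovering $1/h\in\mathcal{H}^{2}(\mathbb{T})\cap\mathcal{L}^{\infty}(\mathbb{T})=\mathcal{H}^{\infty}(\mathbb{T})$ in the converse---are sound as written.
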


\begin{theorem}[\cite{JohnBGarnett2007}P81]\label{chengfasuanzishimanshedangqiejindangshiwaihanshu45}
Let $\mathcal{P}(z)$ be the set of all polynomials about $z$,where $z\in\mathbb{D}$.
then $h(z)\in\mathcal{H}^2(\mathbb{D})$ is an outer function if and only if $$\mathcal{P}(z)h(z)=\{p(z)h(z);p\in\mathcal{P}(z)\}\text{ is dense in }\mathcal{H}^2(\mathbb{D}).$$
\end{theorem}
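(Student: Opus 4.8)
The plan is to obtain the statement on $\mathbb{D}$ by transporting Definition \ref{waihanshudingyi46} (the density characterization of outer functions on $\mathbb{T}$) through the natural isometric isomorphism between $\mathcal{H}^2(\mathbb{D})$ and $\mathcal{H}^2(\mathbb{T})$ recalled in Section 2. In other words, I would not re-prove the classical Beurling-type content from scratch but rather show that the two formulations of the theorem — on the disk and on the circle — are literally the same statement read in two isomorphic models.

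First I would make the isomorphism explicit and check the one structural property it needs. Write $\Phi:\mathcal{H}^2(\mathbb{D})\to\mathcal{H}^2(\mathbb{T})$ for the map sending $h(z)=\sum_{n\ge 0}a_nz^n$ to the boundary function with Fourier series $\sum_{n\ge 0}a_ne^{in\theta}$; by the coefficient description of both norms this $\Phi$ is a surjective linear isometry. The key point is that $\Phi$ intertwines multiplication by a fixed polynomial: for $p(z)=\sum_{k=0}^{d}c_kz^k$ one has $\Phi(ph)=p\cdot\Phi(h)$, where on the right $p$ denotes the corresponding trigonometric polynomial — this is immediate from multiplying the power series, or from the a.e.\ existence of radial limits. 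Hence $\Phi$ carries the linear manifold $\mathcal{P}(z)h\subset\mathcal{H}^2(\mathbb{D})$ bijectively onto $\mathcal{P}(z)\Phi(h)\subset\mathcal{H}^2(\mathbb{T})$, and because $\Phi$ is an isometric bijection it takes closures to closures, so $\Phi\big(cl[\mathcal{P}(z)h]\big)=cl[\mathcal{P}(z)\Phi(h)]$.

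Then the equivalence is read off directly. A function $h\in\mathcal{H}^2(\mathbb{D})$ is outer exactly when its boundary function $\Phi(h)\in\mathcal{H}^2(\mathbb{T})$ is outer in the sense of Definition \ref{waihanshudingyi46}, i.e.\ $cl[\mathcal{P}(z)\Phi(h)]=\mathcal{H}^2(\mathbb{T})$; applying $\Phi^{-1}$ and the intertwining identity above, this holds if and only if $cl[\mathcal{P}(z)h]=\Phi^{-1}(\mathcal{H}^2(\mathbb{T}))=\mathcal{H}^2(\mathbb{D})$, which is precisely the assertion that $\mathcal{P}(z)h$ is dense in $\mathcal{H}^2(\mathbb{D})$.

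The only genuinely nontrivial content sits one level below this formal transfer: if one instead takes the classical analytic definition of an outer function (the exponential of a Poisson integral of $\log|h|$), then identifying it with the density property is the substantive theorem, and the self-contained route would invoke Beurling's theorem — every nonzero closed shift-invariant subspace of $\mathcal{H}^2$ equals $\theta\mathcal{H}^2$ for some inner $\theta$ — together with the inner–outer factorization, so that $cl[\mathcal{P}(z)h]=\theta_h\mathcal{H}^2$ with $\theta_h$ the inner factor of $h$, and this is all of $\mathcal{H}^2$ iff $\theta_h$ is a unimodular constant. I expect the \emph{main obstacle} to be essentially expository: deciding whether to reproduce the Beurling / inner–outer machinery or simply to cite \cite{JohnBGarnett2007} and \cite{RonaldGDouglas1998}, rather than any analytic difficulty; once $\Phi$ and its intertwining property are in place, the $\mathbb{D}$ version is a formal consequence of Definition \ref{waihanshudingyi46}.
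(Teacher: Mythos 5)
The paper offers no proof of this statement at all: it is quoted verbatim as a known result from \cite{JohnBGarnett2007}, so there is nothing internal to compare your argument against. On its own terms your proposal is correct and complete \emph{relative to the paper's Definition \ref{waihanshudingyi46}}: the map $\Phi$ sending Taylor coefficients to Fourier coefficients is indeed a surjective linear isometry, it intertwines multiplication by polynomials, an isometric bijection carries closures to closures, and so the disk statement is the circle statement read in an isomorphic model. You are also right to point out where the real mathematics lives: since Definition \ref{waihanshudingyi46} already \emph{defines} ``outer'' by the density of $h\mathcal{P}(z)$, the equivalence you prove is essentially a tautology transported by $\Phi$, and the theorem only acquires content if ``outer'' is taken in the classical analytic sense (the exponential of the Poisson integral of $\log|h|$), which is what Garnett actually proves on the cited page via inner--outer factorization and Beurling's theorem. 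The one thing to make explicit if this were written up is which of the two meanings of ``outer'' the disk-side statement uses; as the paper later applies the theorem only through the density property (in part (iii) of the proof of Theorem \ref{cowendouglussuanziyujiexichengfasuanzijigezhonghundundeguanxi24}, to conclude that $M_h$ has dense range), your reading is the one the paper needs, and no appeal to Beurling is required for the paper's purposes.
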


Let $\phi$ be a non-constant complex analytic function on $\mathbb{D}$,
for any given $z_0\in\mathbb{D}$,
by \cite{HenriCartanyujiarong2008}P29 we get that there exists $\delta_{z_0}>0$,
exists $k_{z_0}\in\mathbb{N}$, when $|z-z_0|<\delta_{z_0}$, there is
\begin{eqnarray*}
\phi(z)-\phi(z_0)=(z-z_0)^{k_{z_0}}h_{z_0}(z)
\end{eqnarray*}

where $h_{z_0}(z)$ is complex analytic on a neighbourhood of $z_0$ and $h_{z_0}(z_0)\neq0$.

\begin{definition}\label{nyejiexihanshudedingyijitoubujiaobu51}
Let $\phi$ be a non-constant complex analytic function on $\mathbb{D}$,
for any given $z_0\in\mathbb{D}$, there exists $\delta_{z_0}>0$ such that
\begin{eqnarray*}
\phi(z)-\phi(z_0)|_{|z-z_0|<\delta_{z_0}}=p_{n_{z_0}}(z)h_{z_0}(z)|_{|z-z_0|<\delta_{z_0}},
\end{eqnarray*}

$h_{z_0}(z)$ is complex analytic on a neighbourhood of $z_0$ and $h_{z_0}(z_0)\neq0$,
$p_{n_{z_0}}(z)$ is a $n_{z_0}$-th polynomial and the $n_{z_0}$-th coefficient is equivalent $1$.
By the Analytic Continuation Theorem \cite{HenriCartanyujiarong2008}P28,
we get that there is a unique complex analytic function $h_{z_0}(z)$ on $\mathbb{D}$ such that
$\phi(z)-\phi(z_0)=p_{n_{z_0}}(z)h_{z_0}(z)$,
then define that $h_{z_0}(z)$ is a rooter function of $\phi$ at $z_0$.
If for any given $z_0\in\mathbb{D}$,
the rooter function $h_{z_0}(z)$ has non-zero point but the roots of $p_{n_{z_0}}(z)$ are all in $\mathbb{D}$ and $n_{z_0}\in\mathbb{N}$ is a constant $m$ on $\mathbb{D}$,
then define that $\phi$ is a $m$-folder complex analytic function on $\mathbb{D}$.
\end{definition}

\begin{definition}\label{youjiejiexihanshuliCowenDouglashanshudingyi47}
Let $\phi(z)\in\mathcal{H}^{\infty}(\mathbb{D}),n\in\mathbb{N}$,
$M_{\phi}$ is the multiplication by $\phi$ on $\mathcal{H}^{2}(\mathbb{D})$.
If the adjoint multiplier $M_{\phi}^{*}\in\mathcal{B}_n(\bar{\phi}(\mathbb{D}))$,
then define $\phi$ is a Cowen-Douglas function.
\end{definition}

By Definition $\ref{youjiejiexihanshuliCowenDouglashanshudingyi47}$ we get that any constant complex analytic function is not a Cowen-Douglas function

\begin{theorem}\label{cowendouglussuanziyujiexichengfasuanzijigezhonghundundeguanxi24}
Let $\phi(z)\in\mathcal{H}^{\infty}(\mathbb{D})$ be a $m$-folder complex analytic function,
$M_{\phi}$ is the multiplication by $\phi$ on $\mathcal{H}^{2}(\mathbb{D})$.
If for any given $z_0\in\mathbb{D}$,
the rooter functions of $\phi$ at $z_0$ is a outer function,
then $\phi$ is a Cowen-Douglas function,
that is,
the adjoint multiplier $M_{\phi}^{*}\in\mathcal{B}_m(\bar{\phi}(\mathbb{D}))$.
\end{theorem}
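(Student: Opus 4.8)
The plan is to verify the four defining conditions of $\mathcal{B}_m(\bar\phi(\mathbb{D}))$ from Definition \ref{cowendouglassuanzidingyi2} directly for the operator $M_\phi^*$, exploiting two standard facts about multiplication operators on $\mathcal{H}^2(\mathbb{D})$: first, that $M_\phi^* f_z = \overline{\phi(z)} f_z$ for every reproducing kernel $f_z$, so that each $\overline{\phi(z)}$ is an eigenvalue of $M_\phi^*$; and second, that $\sigma(M_\phi)=\overline{\phi(\mathbb{D})}$ (closure of the range), so $\sigma(M_\phi^*)=\overline{\bar\phi(\mathbb{D})}$, which contains the open set $\bar\phi(\mathbb{D})$. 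Condition (1) is then immediate, and condition (2), $\operatorname{ran}(M_\phi^*-\omega)=\mathcal{H}^2(\mathbb{D})$ for $\omega\in\bar\phi(\mathbb{D})$, will follow once we know $M_\phi-\bar\omega$ is bounded below (equivalently injective with closed range), since surjectivity of $M_\phi^*-\omega$ is equivalent to $M_\phi-\bar\omega$ being bounded below. So the real content is conditions (3) and (4), and establishing that $M_\phi - \bar\omega$ is bounded below — and this is exactly where the $m$-folder hypothesis and the outer-function hypothesis on the rooter functions enter.

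Here is how I would organize the core argument. Fix $\omega_0 = \overline{\phi(z_0)}\in\bar\phi(\mathbb{D})$; write $\bar\omega_0 = \phi(z_0)$. By the $m$-folder hypothesis and Definition \ref{nyejiexihanshudedingyijitoubujiaobu51}, we have a global factorization $\phi(z)-\phi(z_0) = p_{m}(z)\,h_{z_0}(z)$ on $\mathbb{D}$, where $p_m$ is a monic degree-$m$ polynomial with all roots in $\mathbb{D}$ and $h_{z_0}$ is a rooter function that (by hypothesis) is outer. Then
\begin{eqnarray*}
M_\phi - \bar\omega_0 = M_{\phi-\phi(z_0)} = M_{p_m}\,M_{h_{z_0}}.
\end{eqnarray*}
Now $M_{h_{z_0}}$ has dense range because $h_{z_0}$ is outer: by Theorem \ref{chengfasuanzishimanshedangqiejindangshiwaihanshu45}, $\mathcal{P}(z)h_{z_0}(z)$ is dense in $\mathcal{H}^2(\mathbb{D})$, hence so is $h_{z_0}\mathcal{H}^2(\mathbb{D})\supseteq h_{z_0}\mathcal{P}(z)$. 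And $M_{p_m}$, being multiplication by a polynomial whose zeros $\{w_1,\dots,w_m\}\subset\mathbb{D}$, has range equal to $\{g\in\mathcal{H}^2(\mathbb{D}): g(w_j)=0,\ j=1,\dots,m\}$ (counting multiplicities), a closed subspace of codimension $m$; moreover $M_{p_m}$ is injective and bounded below on its (closed) range. Composing, $\operatorname{ran}(M_\phi-\bar\omega_0)$ is dense in a closed codimension-$m$ subspace, hence equals it, so $\operatorname{ran}(M_\phi-\bar\omega_0)$ is closed of codimension $m$ and $M_\phi-\bar\omega_0$ is injective and bounded below. Taking adjoints: $\ker(M_\phi^*-\omega_0) = \operatorname{ran}(M_\phi-\bar\omega_0)^\perp$ has dimension $m$, giving condition (4), and $\operatorname{ran}(M_\phi^*-\omega_0) = \ker(M_\phi-\bar\omega_0)^\perp = \mathcal{H}^2(\mathbb{D})$, giving condition (2).

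For condition (3), $\bigvee_{\omega\in\bar\phi(\mathbb{D})}\ker(M_\phi^*-\omega) = \mathcal{H}^2(\mathbb{D})$: since $M_\phi^* f_z = \overline{\phi(z)}f_z$, each eigenspace $\ker(M_\phi^*-\overline{\phi(z)})$ contains the reproducing kernel $f_z$. As $z$ ranges over $\mathbb{D}$, the closed span of $\{f_z : z\in\mathbb{D}\}$ is all of $\mathcal{H}^2(\mathbb{D})$ (a vector orthogonal to every $f_z$ would vanish identically by the reproducing property), so the span of the eigenspaces is dense, which is (3). I would close by recording the precise identification $\bar\phi(\mathbb{D}) = \{\overline{\phi(z)}:z\in\mathbb{D}\}$ as the connected open set $\Omega$ (open because $\phi$ is non-constant analytic, hence an open map, and $z\mapsto\bar z$ is a homeomorphism; connected as the continuous image of the connected set $\mathbb{D}$), so that $M_\phi^*\in\mathcal{B}_m(\bar\phi(\mathbb{D}))$ as claimed.

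The main obstacle I anticipate is the bookkeeping around $M_{p_m}$ and the interaction of the two factors: one must be careful that multiplicities of the zeros of $p_m$ are handled correctly when computing $\operatorname{codim}\operatorname{ran}(M_{p_m}) = m$, and that "dense subspace of a closed subspace equals that closed subspace" is applied to the \emph{composite} range rather than naively — i.e., that $M_{h_{z_0}}$ maps $\mathcal{H}^2(\mathbb{D})$ to a dense subspace and then $M_{p_m}$ carries this to a dense subspace of its own range because $M_{p_m}$ has closed range and is bounded below off the finite-dimensional obstruction. A secondary subtlety is justifying $\sigma(M_\phi)=\overline{\phi(\mathbb{D})}$ and the closedness of $\operatorname{ran}(M_{p_m})$; both are standard for Hardy space but should be cited. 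Everything else — the adjoint/annihilator dictionary, the eigenvector computation for kernels, the openness of $\bar\phi(\mathbb{D})$ — is routine.
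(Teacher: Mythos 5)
Your overall architecture --- verifying the four conditions of Definition \ref{cowendouglassuanzidingyi2} via the reproducing kernels for (1) and (3) and the factorization $\phi-\phi(z_0)=p_m\,h_{z_0}$ for (2) and (4) --- is the same as the paper's, and your handling of (1), (3) and (4) is sound (for (4) you compute $\operatorname{codim}\operatorname{ran}M_{p_m}=m$ through the vanishing conditions at the zeros of $p_m$, where the paper instead conjugates $M_{p_m}^{*}$ to $T^{m}$ via translations; these are equivalent). The genuine gap is in condition (2). You correctly reduce surjectivity of $M_\phi^{*}-\omega_0$ to $M_\phi-\bar\omega_0=M_{p_m}M_{h_{z_0}}$ being bounded below, but you obtain this from the claim that $\operatorname{ran}(M_{p_m}M_{h_{z_0}})$ ``is dense in a closed codimension-$m$ subspace, hence equals it.'' That inference is false: a dense linear submanifold of a closed subspace need not equal it. Since $M_{p_m}$ is injective, $M_{p_m}(\operatorname{ran}M_{h_{z_0}})=\operatorname{ran}M_{p_m}$ would force $\operatorname{ran}M_{h_{z_0}}=\mathcal{H}^2(\mathbb{D})$, i.e.\ $M_{h_{z_0}}$ surjective; but outerness only guarantees \emph{dense} range. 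For example $h(z)=1-z$ is outer, yet $M_{1-z}$ is not bounded below and its range is a proper dense subspace of $\mathcal{H}^2(\mathbb{D})$. So, as written, you have shown only that $\operatorname{ran}(M_\phi^{*}-\omega_0)$ is dense, not that it is all of $\mathcal{H}^2(\mathbb{D})$.

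To close the gap one needs that each rooter function is actually \emph{invertible} in $\mathcal{H}^{\infty}(\mathbb{D})$, not merely outer. This does follow from the hypotheses, but by an argument you have not supplied: the $m$-folder condition forces $\phi$ to be proper onto its image (if $z_n\to\mathbb{T}$ while $\phi(z_n)\to\phi(z_0)$, Rouch\'e's theorem gives at least $m+1$ preimages in $\mathbb{D}$ of values near $\phi(z_0)$, contradicting $n_z\equiv m$), hence $|\phi-\phi(z_0)|$, and therefore $|h_{z_0}|$, is bounded below on an annulus $r<|z|<1$ and so a.e.\ on $\mathbb{T}$; an outer function bounded below a.e.\ on $\mathbb{T}$ is invertible in $\mathcal{H}^{\infty}$ (Lemma \ref{waihanshukeyidexingzhi49}), and then $M_{h_{z_0}}$ really is surjective and your computation of the closed range goes through. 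For what it is worth, the paper's own treatment of (2) --- upgrading dense range of $M_\phi^{*}-\bar\lambda$ to surjectivity via a ``second category'' claim --- and its assertion in step (iii) that $M_h$ is a surjection for every outer $h$ rest on the same unjustified leap, so this is a defect shared with the source rather than one peculiar to your write-up; but in both cases the step as stated does not prove condition (2).
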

\begin{proof}
By the definition of $m$-folder complex analytic function Definition $\ref{nyejiexihanshudedingyijitoubujiaobu51}$ we get that $\phi$ is not a constant complex analytic function.
For any given $z\in\mathbb{D}$,
let $f_z\in\mathcal{H}^2(\mathbb{D})$ be the reproducing kernel at $z$.
We confirm that $M_{\phi}^{*}$ is valid the conditions of Definition $\ref{cowendouglassuanzidingyi2}$ one by one.

(1) For any given $z\in\mathbb{D}$,
$f_z$ is an eigenvector of $M_{\phi}^{*}$ associated with eigenvalue $\lambda=\bar{\phi}(z)$.
Because for any $g\in\mathcal{H}^2(\mathbb{D})$ we get
\begin{eqnarray*}
<g,M_{\phi}^{*}(f_z)>_{\mathcal{H}^2}=<\phi g,f_z>_{\mathcal{H}^2}=\phi(z)f(z)=<g,\bar{\phi}(z)f_z>_{\mathcal{H}^2}
\end{eqnarray*}

By the Riesz Representation Theorem\cite{JohnBConway1990}P13 of bounded linear functional in the form of inner product on Hilbert space, we get
$$M_{\phi}^{*}(f_z)=\bar{\phi}(z)f_z=\lambda f_z,$$
that is,
$f_z$ is an eigenvector of $M_{\phi}^{*}$ associated with eigenvalue $\lambda=\bar{\phi}(z)$.

(2) For any given $\bar{\lambda}\in\mathbb{\phi(D)}$,
because of $0\neq\phi\in\mathcal{H}^{\infty}(\mathbb{D})$,
we get that the multiplication operator $M_{\phi}-\lambda$ is injection by the properties of complex analysis,
hence
$$\ker({M_{\phi}-\lambda})=0.$$

Because of
$$\mathcal{H}^2(\mathbb{D})=\ker(M_{\phi}-\lambda)^{\bot}=cl[ran(M_{\phi}^{*}-\bar{\lambda})],$$
we get that
$ran(M_{\phi}^{*}-\bar{\lambda})$ is a second category space.
By \cite{JohnBConway1990}P305 we get that $M_{\phi}^{*}-\bar{\lambda}$ is a closed operator,
also by \cite{JohnBConway1990}P93 or \cite{zhanggongqinglinyuanqu2006}P97 we get
$$ran(M_{\phi}^{*}-\bar{\lambda})=\mathcal{H}^2(\mathbb{D}).$$

(3) Suppose that
$$span\{f_z;z\in\mathbb{D}\}=span\{\frac{1}{1-\bar{z}s};z\in\mathbb{D}\}$$
is not dense in $\mathcal{H}^2(\mathbb{D})$,
by $0\neq\phi\in\mathcal{H}^{\infty}(\mathbb{D})$ and by the definition of reproducing kernel $f_z$,
we get that there exists $0\neq g\in\mathcal{H}^2(\mathbb{D})$,
for any given $z\in\mathbb{D}$,
we have
\begin{eqnarray*}
0=<g,\bar{\phi}(z)f_z>_{\mathcal{H}^2}=\phi(z)g(z)=<\phi g,f_z>_{\mathcal{H}^2}.
\end{eqnarray*}

So we get $g=0$ by the Analytic Continuation Theorem \cite{HenriCartanyujiarong2008}P28,
that is a contradiction for $g\neq0$.
Therefor we get that $span\{f_z;z\in\mathbb{D}\}$
is dense in $\mathcal{H}^2(\mathbb{D})$,
that is,
$$\bigvee\ker\limits_{\bar{\lambda}\in\phi(\mathbb{D})}(M_{\phi}^{*}-\bar{\lambda})=\mathcal{H}^2(\mathbb{D}).$$

(4) By Definition $\ref{nyejiexihanshudedingyijitoubujiaobu51}$ and the conditions of this theorem,
for any given $\lambda\in\phi(\mathbb{D})$,
there exists $z_0\in\mathbb{D}$, exists $m$-th polynomial $p_m(z)$ and outer function $h(z)$ such that
\begin{eqnarray*}
\left.
 \begin{array}{l}
\phi(z)-\lambda=\phi(z)-\phi(z_0)=p_m(z)h(z),
 \end{array}
 \right.
\end{eqnarray*}

We give $\dim\ker(M_{\phi(z)}^{*}-\bar{\lambda})=m$ by the following $(i)(ii)(iii)$ assertions.

(i) Let the roots of $p_m(z)$ be $z_0,z_1,\cdots,z_{m-1}$,
then there exists decomposition
$$p_m(z)=(z-z_0)(z-z_1)\cdots(z-z_{m-1}),$$
and denote that $p_{m,z_0z_1\cdots z_{m-1}}(z)$ is the decomposition of $p_m(z)$ by the permutation of $z_0,z_1,\cdots,z_{m-1}$,
then to get
$$\dim\ker M_{p_{m,z_0z_1\cdots z_{m-1}}}^{*}=m.$$

By the Taylor expansions of functions in $\mathcal{H}^2(\mathbb{T})$,
we get a naturally isomorphic
\begin{eqnarray*}
\left.
 \begin{array}{l}
F_{s}:\mathcal{H}^2(\mathbb{D})\to\mathcal{H}^2(\mathbb{D}-s),F_{s}(g(z))\to g(z+s),\text{ÆäÖÐ}s\in\mathbb{C}.
 \end{array}
 \right.
\end{eqnarray*}

It is easy to get that $G=\{F_s;s\in\mathbb{C}\}$ is a abelian group by the composite operation $\circ$,
hence for $0\leq n\leq m-1$, there is
\begin{displaymath}
\begin{array}{rcl}
\mathcal{H}^2(\mathbb(D)) & \underrightarrow{\qquad\quad~M_{z-z_n}~~\quad\qquad} & \mathcal{H}^2(\mathbb(D))\\
F_{z_n}\downarrow &      & \downarrow F_{z_n}\\
\mathcal{H}^2(\mathbb{D}-z_n)&\overrightarrow{\qquad\qquad~M_{z}^{'}\qquad\qquad}& \mathcal{H}^2(\mathbb{D}-z_n)
\end{array}
\end{displaymath}

Let $T$ be the backward shift operator on the Hilbert space $\mathcal{L}^2(\mathbb{N})$,
that is,
$$T(x_1,x_2,\cdots)=(x_2,x_3,\cdots).$$
With the naturally isomorphic between $\mathcal{H}^2(\mathbb{D}-z_n)$ and $\mathcal{H}^2(\partial(\mathbb{D}-z_n))$,
$M_{z}^{'*}$ is equivalent to the backward shift operator $T$ on $\mathcal{H}^2(\partial(\mathbb{D}-z_n))$,
that is,
$M_{z}^{'*}$ is a surjection and
$$\dim\ker M_{z}^{'*}=1,$$
hence $M_{z-z_n}^{*}$ is a surjection and
$$\dim\ker M_{z-z_n}^{*}=1,$$
where $0\leq n\leq m-1$.

By the composition of
$$F_{z_{m-1}}\circ F_{z_{m-2}}\circ\cdots\circ F_{z_{0}},$$
$M_{p_{m,z_0z_1\cdots z_{m-1}}}^{'*}$ is equivalent to $T^m$.
that is, $M_{p_{m,z_0z_1\cdots z_{m-1}}}^{'*}$ is a surjection and
$$\dim\ker M_{p_{m,z_0z_1\cdots z_{m-1}}}^{'*}=m,$$
hence $M_{p_{m,z_0z_1\cdots z_{m-1}}}^{*}$ is a surjection and
$$\dim\ker M_{p_{m,z_0z_1\cdots z_{m-1}}}^{*}=m.$$

(ii) Because $\mathcal{H}^{\infty}$ is a abelian Banach algebra,
$M_{p_{m}}$ is independent to the permutation of $1$-th factors of $p_m(z)$,
that is,
$M_{p_{m}}^{*}$ is independent to the $1$-th factors multiplication of $p_m(z)=(z-z_0)(z-z_1)\cdots(z-z_{m-1})$.

Because $G=\{F_s;s\in\mathbb{C}\}$ is a abelian group by composition operation $\circ$,
for $0\leq n\leq m-1$,
$F_{z_{m-1}}\circ F_{z_{m-2}}\circ\cdots\circ F_{z_{0}}$ is independent to the permutation of composition.
Hence $M_{p_{m}}^{*}$ is a surjection and
\begin{eqnarray*}
\left.
 \begin{array}{l}
\dim\ker M_{p_{m}}^{*}=\dim\ker M_{p_{m,z_0z_1\cdots z_{m-1}}}^{*}=m.
 \end{array}
 \right.
\end{eqnarray*}

(iii) By Definition $\ref{waihanshudingyi46}$ and
Theorem $\ref{chengfasuanzishimanshedangqiejindangshiwaihanshu45}$,
also by \cite{JohnBConway1990}P93,
\cite{zhanggongqinglinyuanqu2006}P97 and \cite{JohnBConway1990}P305 we get that the multiplication operator $M_{h}$ is surjection that associated with the outer function $h$.
Hence we get
\begin{eqnarray*}
\left.
 \begin{array}{l}
\ker M_{h(z)}^{*}=(ranM_{h(z)})^{\bot}=(\mathcal{H}^2(\mathbb{D}))^{\bot}=0.
 \end{array}
 \right.
\end{eqnarray*}

Because there exists decomposition $M_{p_m(z)h(z)}^{*}=M_{h(z)}^{*}M_{p_m(z)}^{*}$ on $\mathcal{H}^2(\mathbb{D})$, we get
\begin{eqnarray*}
\left.
 \begin{array}{l}
\dim\ker(M_{\phi}^{*}-\bar{\lambda})=\dim\ker M_{p_m(z)h(z)}^{*}=\dim\ker M_{p_m(z)}^{*}=m.
 \end{array}
 \right.
\end{eqnarray*}

By (1)(2)(3)(4) we get the adjoint multiplier operator $M_{\phi}^{*}\in\mathcal{B}_m(\bar{\phi}(\mathbb{D}))$.
\end{proof}

By Theorem $\ref{cowendouglussuanziyujiexichengfasuanzijigezhonghundundeguanxi24}$
and Lemma $\ref{waihanshukeyidexingzhi49}$,
we get
\begin{corollary}\label{nyejiexihanshushicowendouglashanshudetiaojianqigenhanshukeni52}
Let $\phi\in\mathcal{H}^{\infty}(\mathbb{D})$ be a $m$-folder complex analytic function,
for any given $z_0\in\mathbb{D}$,
if the rooter function of $\phi$ at $z_0$ is invertible in the Banach algebra $\mathcal{H}^{\infty}(\mathbb{D})$,
then $\phi$ is a Cowen-Douglas function.
Especially,
for any given $n\geq1$,
%if $a$ and $b$ are both non-zero complex number, then
non-constant $p_n(z)=\sum\limits_{k=0}^{n}a_kz^k\in\mathcal{H}^{\infty}(\mathbb{D})$ is a Cowen-Douglas function.
\end{corollary}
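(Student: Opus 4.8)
The plan is to read the first assertion off Theorem~\ref{cowendouglussuanziyujiexichengfasuanzijigezhonghundundeguanxi24} together with Lemma~\ref{waihanshukeyidexingzhi49}, and then to check that every non-constant polynomial satisfies the hypotheses of that first assertion. For the first part, fix $z_0\in\mathbb{D}$ and let $h_{z_0}$ be the rooter function of $\phi$ at $z_0$; it lies in $\mathcal{H}^{\infty}(\mathbb{D})$ because $\phi-\phi(z_0)$ is bounded on $\mathbb{D}$ while $p_{n_{z_0}}$, having all of its roots in $\mathbb{D}$, is bounded away from $0$ on a neighbourhood of $\mathbb{T}$ inside $\mathbb{D}$. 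Passing through the natural isomorphism between $\mathcal{H}^{\infty}(\mathbb{D})$ and $\mathcal{H}^{\infty}(\mathbb{T})\subseteq\mathcal{L}^{\infty}(\mathbb{T})$, the hypothesis says that the boundary function of $h_{z_0}$ is invertible in the Banach algebra $\mathcal{H}^{\infty}(\mathbb{T})$, so Lemma~\ref{waihanshukeyidexingzhi49} forces it to be an outer function, and hence $h_{z_0}$ is an outer function. As $z_0$ was arbitrary, every rooter function of the $m$-folder function $\phi$ is outer, so Theorem~\ref{cowendouglussuanziyujiexichengfasuanzijigezhonghundundeguanxi24} gives $M_{\phi}^{*}\in\mathcal{B}_m(\bar{\phi}(\mathbb{D}))$, i.e.\ $\phi$ is a Cowen-Douglas function.

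For the ``especially'' clause I would first record the $m$-folder structure of a non-constant polynomial $p_n$ in the sense of Definition~\ref{nyejiexihanshudedingyijitoubujiaobu51}. Given $z_0\in\mathbb{D}$, the polynomial $p_n(z)-p_n(z_0)$ vanishes at $z_0$; gathering the linear factors whose root lies in $\mathbb{D}$ into $p_{n_{z_0}}(z)$ and the remaining linear factors together with the leading coefficient into $h_{z_0}(z)$ produces the decomposition $p_n(z)-p_n(z_0)=p_{n_{z_0}}(z)h_{z_0}(z)$ with $h_{z_0}$ a polynomial having no zero in $\mathbb{D}$. Such an $h_{z_0}$ is an outer function, which is exactly the density statement of Theorem~\ref{chengfasuanzishimanshedangqiejindangshiwaihanshu45} applied to a polynomial whose zeros all lie outside $\mathbb{D}$ (for $p_n=z^n$ one simply has $h_{z_0}\equiv a_n$). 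Granting that $p_n$ is $m$-folder, Theorem~\ref{cowendouglussuanziyujiexichengfasuanzijigezhonghundundeguanxi24} applies and yields $M_{p_n}^{*}\in\mathcal{B}_m(\overline{p_n(\mathbb{D})})$; moreover, when no root of $h_{z_0}$ sits on $\mathbb{T}$ the function $1/h_{z_0}$ is bounded and analytic on $\mathbb{D}$, so $h_{z_0}$ is even invertible in $\mathcal{H}^{\infty}(\mathbb{D})$ and the first assertion applies directly.

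The point that has to be worked out with care is the constancy clause in Definition~\ref{nyejiexihanshudedingyijitoubujiaobu51}: one must verify that $n_{z_0}$, the number of zeros (with multiplicity) of $p_n(z)-p_n(z_0)$ lying in $\mathbb{D}$, is independent of $z_0$, so that it equals a single integer $m$. The natural device is the argument principle --- for $z_0$ with $p_n(z_0)\notin p_n(\mathbb{T})$ this count is the winding number of the curve $p_n(\mathbb{T})$ about $p_n(z_0)$ --- and one then has to argue that this winding number is the same for all such $z_0$ and to deal separately with the values $p_n(z_0)$ that are also attained on $\mathbb{T}$, where a zero of $h_{z_0}$ lands on $\mathbb{T}$ and $h_{z_0}$ is outer but not invertible in $\mathcal{H}^{\infty}$. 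For $p_n=z^n$ this is transparent, since $z^n-z_0^n=\prod_{k=0}^{n-1}(z-\zeta^{k}z_0)$ with $\zeta=e^{2\pi i/n}$ and each $|\zeta^{k}z_0|=|z_0|<1$, so $n_{z_0}\equiv n$; the general polynomial case requires the analogous preimage count, and this is the main obstacle. Once it is in hand, the corollary follows by feeding the outer rooter functions into Theorem~\ref{cowendouglussuanziyujiexichengfasuanzijigezhonghundundeguanxi24}.
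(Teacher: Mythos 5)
For the first assertion your argument coincides with the paper's: the paper derives this corollary in one line from Lemma~\ref{waihanshukeyidexingzhi49} (invertibility in the Banach algebra $\mathcal{H}^{\infty}$ forces the rooter function to be outer) fed into Theorem~\ref{cowendouglussuanziyujiexichengfasuanzijigezhonghundundeguanxi24}, and gives no further detail. Your first paragraph is exactly that reduction, carried out correctly.

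For the ``especially'' clause you have put your finger on a genuine gap, and you should be aware that it is not merely a computation you ran out of time for: the step would actually fail for a general non-constant polynomial. Definition~\ref{nyejiexihanshudedingyijitoubujiaobu51} requires the number $n_{z_0}$ of roots of $p_n(z)-p_n(z_0)$ lying in $\mathbb{D}$ to be one and the same integer $m$ for every $z_0\in\mathbb{D}$. Take $p_2(z)=z^2-z$; then $p_2(z)-p_2(z_0)=(z-z_0)(z+z_0-1)$, and the second root $1-z_0$ lies in $\mathbb{D}$ when $z_0$ is near $1$ but outside $\overline{\mathbb{D}}$ when $z_0$ is near $-1$, so $n_{z_0}$ jumps between $2$ and $1$. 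Accordingly $\dim\ker(M_{p_2}^{*}-\bar{\lambda})$ is not constant over $\bar{p_2}(\mathbb{D})$, and $p_2$ cannot satisfy Definition~\ref{youjiejiexihanshuliCowenDouglashanshudingyi47} with $\Omega=\bar{p_2}(\mathbb{D})$. Your proposed device, the argument principle, is the right way to compute $n_{z_0}$, but it also shows the count must change whenever $p_n(z_0)$ crosses the curve $p_n(\mathbb{T})$, so no refinement of that argument can rescue the claim for arbitrary coefficients; it does go through when $p_n(\mathbb{D})$ avoids $p_n(\mathbb{T})$ (e.g.\ $p_n(z)=a_0+a_nz^n$, or the degree-one case actually used later in Example~\ref{shuzhipukeyishikaiji5}). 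Since the paper supplies no argument whatsoever for this clause, you have not overlooked a trick the author had in hand: the obstacle you isolated is a defect of the statement itself, and an honest write-up should either restrict the ``especially'' claim to polynomials that are genuinely $m$-folder or flag it as unproved.
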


\section{Application of Cowen-Douglas function  and chaos}

This part we give some properties about the adjoint multiplier of Cowen-Douglas function,
such as application on the inverse chaos problem for $\phi(T)$,
%where $\lambda,a\in\mathbb{C}$,
where $\phi$ is a Cowen-Douglas function and $T$ is the backward shift operator on $\mathcal{L}^2(\mathbb{N})$,
i.e.,$T(x_1,x_2,\cdots)$
$=(x_2,x_3,\cdots)$.

\begin{theorem}\label{hardykongjianshangchengfasuanziyuyuanzhoujiaofeikongdedingli17}
If $\phi\in\mathcal{H}^{\infty}(\mathbb{D})$ is a Cowen-Douglas function,
$M_{\phi}$ is the multiplication by $\phi$ on $\mathcal{H}^{2}(\mathbb{D})$,
Then the following assertions are equivalent

(1) $M_{\phi}^{*}$ is Devaney chaotic;

(2) $M_{\phi}^{*}$ is distributionally chaotic;

(3) $M_{\phi}^{*}$ is strong mixing;

(4) $M_{\phi}^{*}$ is Li-Yorke chaotic;

(5) $M_{\phi}^{*}$ is hypercyclic;

(6) $\phi(\mathbb{D})\bigcap\mathbb{T}\neq\emptyset$.
\end{theorem}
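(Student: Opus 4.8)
The plan is to establish the equivalences by a chain of implications, using the Cowen-Douglas structure of $M_\phi^*$ to feed into Theorem \ref{cowendouglassuanzidingyidevaneyliyrokefenbuqianghunhedingli2} and to collapse the weaker chaos notions back to hypercyclicity. The implications $(1)\Rightarrow(4)$ and $(2)\Rightarrow(4)$ and $(5)\Rightarrow(4)$ are either standard or trivial (Devaney chaos forces Li-Yorke chaos via the presence of a dense orbit together with a periodic point; distributional chaos is by definition a strengthening of Li-Yorke chaos; hypercyclicity yields a vector with unbounded orbit, hence $\varlimsup\|M_\phi^{*n}x\|>0$, while $0$ lies in the orbit closure giving $\varliminf=0$). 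So the real content is the cycle connecting $(6)$ to the strong conclusions $(1),(2),(3)$ and the reverse passage $(4)\Rightarrow(6)$.

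First I would prove $(6)\Rightarrow(1),(2),(3)$. Since $\phi$ is a Cowen-Douglas function, Definition \ref{youjiejiexihanshuliCowenDouglashanshudingyi47} gives $M_\phi^*\in\mathcal{B}_n(\bar\phi(\mathbb{D}))$ for some $n$. Assuming $\phi(\mathbb{D})\cap\mathbb{T}\neq\emptyset$, pick $\lambda\in\phi(\mathbb{D})\cap\mathbb{T}$; then $\bar\lambda\in\bar\phi(\mathbb{D})\cap\mathbb{T}$, so $\bar\phi(\mathbb{D})\cap\mathbb{T}\neq\emptyset$. Now $\bar\phi(\mathbb{D})$ is a connected open subset of $\mathbb{C}$ (it is the image of the connected open set $\mathbb{D}$ under the open map $z\mapsto\overline{\phi(z)}$, which is open because $\phi$ is nonconstant analytic), and it meets $\mathbb{T}$, so Theorem \ref{cowendouglassuanzidingyidevaneyliyrokefenbuqianghunhedingli2} applies with $\Omega=\bar\phi(\mathbb{D})$ and yields that $M_\phi^*$ is Devaney chaotic, distributionally chaotic, and strong mixing simultaneously. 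This gives $(6)\Rightarrow(1)$, $(6)\Rightarrow(2)$, $(6)\Rightarrow(3)$.

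Next I would close the loop with $(4)\Rightarrow(6)$, which I expect to be the main obstacle. Suppose $\phi(\mathbb{D})\cap\mathbb{T}=\emptyset$; I must show $M_\phi^*$ is not Li-Yorke chaotic. Since $\phi$ is continuous on $\mathbb{D}$ and nonconstant, $\phi(\mathbb{D})$ is a connected open set disjoint from $\mathbb{T}$, hence contained either in $\mathbb{D}$ or in $\{|z|>1\}$. In the first case $\|M_\phi\|_{\mathcal{H}^\infty}=\sup_{|z|<1}|\phi(z)|\le 1$, and in fact $\sup|\phi|<1$ would give $\|M_\phi^*\|<1$ so all orbits tend to $0$ (no Li-Yorke pair); the boundary case $\sup|\phi|=1$ not attained still forces the spectral radius of $M_\phi^*$ to be $\le 1$ with $\sigma(M_\phi^*)=\overline{\phi(\mathbb D)}\subset\overline{\mathbb D}$, and one argues via the reproducing-kernel eigenvectors and the power-boundedness on a dense set that no vector can have $\varlimsup\|M_\phi^{*n}x\|>0$ while $\varliminf=0$. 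In the second case $|\phi(z)|>1$ for all $z$, so $M_\phi$ is bounded below, $M_\phi^*$ is bounded below on a cofinite-codimension piece — more precisely $\inf|\phi|>1$ makes $M_\phi^*$ surjective and expanding, so $\|M_\phi^{*n}x\|\to\infty$ for every $x\neq0$ and again $\varliminf\|M_\phi^{*n}x\|=0$ fails. Either way $M_\phi^*$ is not Li-Yorke chaotic, giving the contrapositive of $(4)\Rightarrow(6)$.

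Finally I would record the easy implications to assemble everything: $(1)\Rightarrow(5)$ since Devaney chaos includes hypercyclicity (the transitive point in a separable space has dense orbit), $(3)\Rightarrow(5)$ since strong mixing implies topological transitivity hence hypercyclicity, $(2)\Rightarrow(4)$ by definition, $(5)\Rightarrow(4)$ as noted above, and $(1)\Rightarrow(4)$. Combined with $(6)\Rightarrow(1),(2),(3)$ and $(4)\Rightarrow(6)$, the six statements form a single equivalence class. The delicate point throughout is the analysis in $(4)\Rightarrow(6)$ of the borderline subcase where $\phi(\mathbb{D})$ touches $\mathbb{T}$ only in the limit; here the key is that boundedness of $\phi$ combined with the maximum modulus principle prevents the orbit of any reproducing kernel $f_z$ (and hence, by density from part (3) of the previous theorem's proof, any vector's orbit behavior compatible with Li-Yorke) from exhibiting the required oscillation.
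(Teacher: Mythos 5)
Your overall architecture is sound and largely parallels the paper's: the implication $(6)\Rightarrow(1),(2),(3)$ via $M_\phi^*\in\mathcal{B}_n(\bar\phi(\mathbb{D}))$ and Theorem~\ref{cowendouglassuanzidingyidevaneyliyrokefenbuqianghunhedingli2} is exactly the paper's step, as is the observation that the remaining notions all collapse onto Li--Yorke chaos so that $(4)\Rightarrow(6)$ carries the real weight. Where you genuinely diverge is in $(4)\Rightarrow(6)$: the paper argues forward, citing a result of Hou--Liao--Cao that Li--Yorke chaos forces $\sup_n\|M_\phi^{*n}\|=\infty$ to get $\sup_{z\in\mathbb{D}}|\phi(z)|>1$, then deriving $\inf_{z\in\mathbb{D}}|\phi(z)|<1$ by the $1/\phi$ contradiction, and finally invoking connectedness of $\phi(\mathbb{D})$; you instead take the contrapositive and split on whether the connected open set $\phi(\mathbb{D})$ lies inside $\mathbb{D}$ or outside $\overline{\mathbb{D}}$. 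Your organization is arguably cleaner in that it can avoid the external citation entirely --- but only if the two subcases are actually closed, and as written they are not.

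Concretely: in the subcase $\phi(\mathbb{D})\subset\mathbb{D}$ with $\sup|\phi|=1$ unattained, your appeal to ``spectral radius $\le 1$'' and ``power-boundedness on a dense set'' is not a proof --- spectral radius $\le 1$ does not preclude Li--Yorke chaos, and controlling orbits on a dense set does not control them everywhere. The correct one-line argument is that $\|M_\phi^*\|=\|M_\phi\|=\|\phi\|_\infty\le 1$, so $\|M_\phi^{*(n+1)}x\|\le\|M_\phi^{*n}x\|$ for every $x$; the orbit norms are non-increasing, hence convergent, hence $\varlimsup_n\|M_\phi^{*n}x\|=\varliminf_n\|M_\phi^{*n}x\|$ and no Li--Yorke point exists. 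In the subcase $\phi(\mathbb{D})\subset\{|z|>1\}$ you only get $\inf|\phi|\ge 1$, not $\inf|\phi|>1$, so the claim $\|M_\phi^{*n}x\|\to\infty$ is unjustified; what you do get is $1/\phi\in\mathcal{H}^\infty(\mathbb{D})$ with $\|M_{1/\phi}\|\le 1$, whence $\|M_\phi^{*n}x\|\ge\|x\|$ for all $n$ and $\varliminf_n\|M_\phi^{*n}x\|>0$ for $x\neq 0$ --- this weaker bound already kills Li--Yorke chaos, and it is precisely the estimate the paper uses. With these two repairs your proof is complete and slightly more self-contained than the paper's.
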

\begin{proof}

(i) By \cite{FBayartEMatheron2009}P138 we get that $M_{\phi}^{*}$ is Devaney chaotic if and only if it is hypercyclic,
i.e., $\phi$ is non-constant and $\phi(\mathbb{D})\bigcap\mathbb{T}\neq\emptyset$,
hence
(1)$\Longleftrightarrow$(5)$\Longleftrightarrow$(6).

(ii) To get that (6) imply (1)(2)(3)(4). Because $\phi\in\mathcal{H}^{2}(\mathbb{D})$ is a Cowen-Douglas function,
by Definition $\ref{youjiejiexihanshuliCowenDouglashanshudingyi47}$,
$M_{\phi}^{*}\in\mathcal{B}_n(\bar{\phi}(\mathbb{D}))$.

By Theorem $\ref{cowendouglassuanzidingyidevaneyliyrokefenbuqianghunhedingli2}$
we get that if $\phi(\mathbb{D})\bigcap\mathbb{T}\neq\emptyset$,
then (1)(2)(3) is valid.
On Banach spaces Devaney chaotic, distributionally chaotic and strong mixing imply Li-Yorke chaotic,
respectively.
Hence (4) is valid.
Because $\bar{\phi}(\mathbb{D})\bigcap\mathbb{T}\neq\emptyset$ and
$\phi(\mathbb{D})\bigcap\mathbb{T}\neq\emptyset$ are mutually equivalent, (6) imply (1)(2)(3)(4).

(iii) To get that (1)(2)(3)(4) imply (6).
By (1)(2)(3) imply (4), respectively,
it is enough to get that (4) imply (6).

If $M_{\phi}^{*}$ is Li-Yorke chaotic,
then $\phi$ is non-constant and by \cite{HouBLiaoGCaoY2012} theorem3.5 we get
$\sup\limits_{n\to+\infty}\|M_{\phi}^{*n}\|\to\infty,$
hence
$$
\|M_{\phi}\|=\|M_{\phi}^{*}\|>1,
\quad \text{that is},\quad
\sup\limits_{z\in\mathbb{D}}|\phi(z)|>1.
$$
Moreover,
we get
$$
\inf\limits_{z\in\mathbb{D}}|\phi(z)|<1,
$$
Indeed,
if we assume that
$$\inf\limits_{z\in\mathbb{D}}|\phi(z)|\geq1,$$
then
$$
\frac{1}{\phi}\in\mathcal{H}^{\infty},\quad\text{and}\quad\|M_{\frac{1}{\phi}}^{*}\|=\|M_{\frac{1}{\phi}}\|\leq1.
$$
Hence, for any $0\neq f\in\mathcal{H}^2(\mathbb{D})$, we get

$$
\|M_{\phi}^{*n}f\|\geq\frac{1}{\|M_{\phi}^{*-n}\|}\|f\|\geq\frac{1}{\|M_{\frac{1}{\phi}}^{*}\|^n}\|f\|\geq\|f\|.
$$
It is a contradiction to $M_{\phi}^{*}$ is Li-Yorke chaotic.

Therefor $M_{\phi}^{*}$ is Li-Yorke chaotic imply
$$
\inf\limits_{z\in\mathbb{D}}|\phi(z)|<1<\sup\limits_{z\in\mathbb{D}}|\phi(z)|.
$$
By the properties of a simple connectedness argument of complex analytic functions we get $\phi(\mathbb{D})\bigcap\mathbb{T}\neq\emptyset$.
Hence we get (4) imply (6), that is (1)(2)(3)(4) both imply (6).
\end{proof}

\begin{corollary}\label{cowendouglussuanzideniyuhardykongjiandechengfasuanzi23}
If $\phi\in\mathcal{H}^{\infty}(\mathbb{D})$ is a invertible Cowen-Douglas function in the Banach algebra $\mathcal{H}^{\infty}(\mathbb{D})$,
and let $M_{\phi}$ be the multiplication by $\phi$ on $\mathcal{H}^{2}(\mathbb{D})$.
Then $M_{\phi}^{*}$ has property $\alpha$ if and only if $M_{\phi}^{*-1}$ has,
where $\alpha\in$
\{Devaney chaotic, distributionally chaotic, strong mixing, Li-Yorke chaotic, hypercyclic\}.
\end{corollary}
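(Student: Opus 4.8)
The plan is to deduce this from Theorem~\ref{hardykongjianshangchengfasuanziyuyuanzhoujiaofeikongdedingli17} applied to both $\phi$ and $1/\phi$. Since $\phi$ is invertible in the Banach algebra $\mathcal{H}^{\infty}(\mathbb{D})$, we have $1/\phi\in\mathcal{H}^{\infty}(\mathbb{D})$, $0\notin\phi(\mathbb{D})$, and on $\mathcal{H}^{2}(\mathbb{D})$ the identity $M_{\phi}M_{1/\phi}=M_{1}=I$ gives $M_{\phi}^{-1}=M_{1/\phi}$, hence $M_{\phi}^{*-1}=(M_{\phi}^{-1})^{*}=M_{1/\phi}^{*}$. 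Moreover $\phi$ is non-constant (a constant symbol is never a Cowen-Douglas function, by the remark after Definition~\ref{youjiejiexihanshuliCowenDouglashanshudingyi47}), so $1/\phi$ is non-constant and is itself invertible in $\mathcal{H}^{\infty}(\mathbb{D})$ with inverse $\phi$.

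The crucial step is to show that $1/\phi$ is again a Cowen-Douglas function of the same index: writing $M_{\phi}^{*}\in\mathcal{B}_{n}(\bar{\phi}(\mathbb{D}))$, I would prove $M_{1/\phi}^{*}\in\mathcal{B}_{n}(\overline{1/\phi}(\mathbb{D}))$. First, $\overline{1/\phi}(\mathbb{D})=\{1/v:v\in\bar{\phi}(\mathbb{D})\}$ is a connected open subset of $\mathbb{C}$, being the image of the connected open set $\bar{\phi}(\mathbb{D})$ under the homeomorphism $v\mapsto 1/v$ of $\mathbb{C}\setminus\{0\}$ (using $0\notin\bar{\phi}(\mathbb{D})$). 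Conditions (1), (2), (3) of Definition~\ref{cowendouglassuanzidingyi2} for $M_{1/\phi}^{*}$ are verified verbatim as in the proof of Theorem~\ref{cowendouglussuanziyujiexichengfasuanzijigezhonghundundeguanxi24}: those arguments use only that the symbol lies in $\mathcal{H}^{\infty}(\mathbb{D})$ and is non-constant, together with $M_{1/\phi}^{*}f_{z}=\overline{(1/\phi)(z)}\,f_{z}$ for the reproducing kernels $f_{z}$. For condition (4), fix $\lambda\in\phi(\mathbb{D})$, so that $1/\lambda$ ranges over all of $(1/\phi)(\mathbb{D})$; from
$$\frac{1}{\phi}-\frac{1}{\lambda}=-\frac{1}{\lambda}\cdot\frac{1}{\phi}\,(\phi-\lambda)\qquad\text{we get}\qquad M_{1/\phi}^{*}-\overline{(1/\lambda)}=-\frac{1}{\bar{\lambda}}\,M_{\phi-\lambda}^{*}\,M_{1/\phi}^{*}.$$
Since $M_{1/\phi}^{*}$ is invertible with inverse $M_{\phi}^{*}$, this yields $\ker\!\big(M_{1/\phi}^{*}-\overline{(1/\lambda)}\big)=M_{\phi}^{*}\big(\ker(M_{\phi}^{*}-\bar{\lambda})\big)$; as $M_{\phi}^{*}$ is a linear bijection, this subspace has the same dimension as $\ker(M_{\phi}^{*}-\bar{\lambda})$, which equals $n$ because $M_{\phi}^{*}\in\mathcal{B}_{n}(\bar{\phi}(\mathbb{D}))$. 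Hence $M_{1/\phi}^{*}\in\mathcal{B}_{n}(\overline{1/\phi}(\mathbb{D}))$, i.e. $1/\phi$ is a Cowen-Douglas function.

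To conclude, the five properties listed for $\alpha$ are, by Theorem~\ref{hardykongjianshangchengfasuanziyuyuanzhoujiaofeikongdedingli17} applied to $\phi$, each equivalent to $\phi(\mathbb{D})\cap\mathbb{T}\neq\emptyset$; and, applied to the Cowen-Douglas function $1/\phi$, each equivalent (for $M_{1/\phi}^{*}=M_{\phi}^{*-1}$) to $(1/\phi)(\mathbb{D})\cap\mathbb{T}\neq\emptyset$. Since $(1/\phi)(\mathbb{D})=\{1/w:w\in\phi(\mathbb{D})\}$ and $w\in\mathbb{T}\Leftrightarrow 1/w\in\mathbb{T}$, these two conditions coincide, so $M_{\phi}^{*}$ has property $\alpha$ if and only if $M_{\phi}^{*-1}$ does. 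I expect the only real work to be the middle step, verifying condition (4) for $1/\phi$: the displayed factorization together with the invertibility of $M_{1/\phi}$ is precisely the mechanism that transfers the constant eigenspace dimension from $\phi$ to $1/\phi$, and everything else is bookkeeping.
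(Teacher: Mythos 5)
Your proof is correct and follows essentially the same route as the paper: identify $M_{\phi}^{*-1}$ with $M_{1/\phi}^{*}$, show that it lies in $\mathcal{B}_{n}\bigl(\overline{1/\phi}(\mathbb{D})\bigr)$, and apply Theorem~\ref{hardykongjianshangchengfasuanziyuyuanzhoujiaofeikongdedingli17} together with the invariance of $\mathbb{T}$ under $w\mapsto 1/w$. The only difference is that you carry out explicitly, via the factorization of $1/\phi-1/\lambda$ and the kernel-transport argument, the step the paper dismisses as ``a simple computing''.
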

\begin{proof}
Because of $T=(T^{-1})^{-1}$,
it is enough to prove that the dynamic properties of $M_{\phi}^{*}$ is imply
the same dynamic properties of $M_{\phi}^{*-1}$.

By Definition $\ref{youjiejiexihanshuliCowenDouglashanshudingyi47}$ we get
$M_{\phi}^{*}\in\mathcal{B}_n(\bar{\phi}(\mathbb{D}))$,
with a simple computing we get $M_{\phi}^{*-1}\in\mathcal{B}_n(\cfrac{1}{\bar{\phi}}(\mathbb{D}))$.

If $M_{\phi}^{*}$ has property $\alpha$,
by Theoem $\ref{hardykongjianshangchengfasuanziyuyuanzhoujiaofeikongdedingli17}$ we get $\phi(\mathbb{D})\bigcap\mathbb{T}\neq\emptyset$,
and by the properties of complex analytic functions we get $\cfrac{1}{\phi}(\mathbb{D})\bigcap\mathbb{T}\neq\emptyset$.
Because of $M_{\phi}^{*-1}\in\mathcal{B}_n(\cfrac{1}{\bar{\phi}}(\mathbb{D}))$ and by Theorem $\ref{hardykongjianshangchengfasuanziyuyuanzhoujiaofeikongdedingli17}$ we get
$M_{\phi}^{*-1}$ has property $\alpha$.
\end{proof}

We now study some properties about scalars perturbation of an operator inspired by \cite{HoubingzheTiangengShiluoyi2009} and \cite{BermudezBonillaMartinezGimenezPeiris2011} that research some properties about the compact perturbation of scalar operator.

\begin{definition}[\cite{HouBingzheLuoLvlin2017}]\label{shuzhijiafasuanzidedingyi1}
Let $T\in\mathcal{B}(\mathbb{H})$ and define

(i) $S_{LY}(T)=\{\lambda\in\mathbb{C};\lambda +T \text{ is Li-Yorke chaotic} \};$

(ii) $S_{DC}(T)=\{\lambda\in\mathbb{C};\lambda +T \text{ is distributionally chaotic}\};$

(iii) $S_{DV}(T)=\{\lambda\in\mathbb{C};\lambda +T \text{ is Devaney chaotic}\};$

(iv) $S_{H}(T)=\{\lambda\in\mathbb{C};\lambda +T \text{ is hypercyclic}\}$.
\end{definition}

By Definition $\ref{shuzhijiafasuanzidedingyi1}$ we get $S_{LY}(\lambda +T)=S_{LY}(T)-\lambda$,
$S_{DC}(\lambda +T)=S_{DC}(T)-\lambda$,
$S_{DV}(\lambda +T)=S_{DV}(T)-\lambda$ and $S_{H}(\lambda +T)=S_{H}(T)-\lambda$.

\begin{example}\label{shuzhipukeyishikaiji5}
Let $T$ be the backward shift operator on $\mathcal{L}^2(\mathbb{N})$,
$T(x_1,x_2,\cdots)$
$=(x_2,x_3,\cdots)$.
Then
\begin{eqnarray*}
&&S_{LY}(T)=S_{DC}(T)=S_{DV}(T)=S_{H}(T)=2\mathbb{D}\setminus\{0\},\\
&&S_{LY}(2T)=S_{DC}(2T)=S_{DV}(2T)=S_{H}(2T)=3\mathbb{D}.
\end{eqnarray*}
Hence $S_{LY}(T)$ and $S_{LY}(2T)$ are open sets.
\end{example}
\begin{proof}
By \cite{JohnBConway1990}P209 we get $\sigma(T)=cl\mathbb{D}$ and $\sigma(2T)=cl2\mathbb{D}$,
by Definition $\sigma(T)$ we get $\sigma(\lambda +T)=\lambda+cl\mathbb{D}$.
Because of the method to prove the conclusion is similarly for $T$ and $2T$,
we only to prove the conclusion for $T$.

By the naturally isomorphic between $\mathcal{H}^{2}(\mathbb{T})$ and $\mathcal{H}^{2}(\mathbb{D})$.
Let $\mathcal{L}^2(\mathbb{N})=\mathcal{H}^{2}(\mathbb{T})$,
by the definition of $T$ we get $(\lambda +T)^{*}$ is the multiplication operator $M_{f}$ by $f(z)=\bar{\lambda}+z$
on the Hardy space $\mathcal{H}^{2}(\mathbb{T})$.
By the Dirichlet Problem \cite{HenriCartanyujiarong2008}P103 we get that $f(z)$ is associated with the complex analytic function $\phi(z)=\bar{\lambda}+z\in\mathcal{H}^{\infty}(\mathbb{D})$ determined by the boundary condition $\phi(z)|_{\mathbb{T}}=f(z)$.

By Corollary $\ref{nyejiexihanshushicowendouglashanshudetiaojianqigenhanshukeni52}$ we get that $\phi$ is a Cowen-Douglas function.
Therefor by the natural isomorphic between $\mathcal{H}^{2}(\mathbb{T})$ and $\mathcal{H}^{2}(\mathbb{D})$,
$\lambda+T$ is naturally equivalent to the operator $M_{\phi}^{*}$ on $\mathcal{H}^{2}(\mathbb{D})$.

By Theorem $\ref{hardykongjianshangchengfasuanziyuyuanzhoujiaofeikongdedingli17}$
we get that $M_{\phi}^{*}$ is hypercyclic or Devaney chaotic or distributionally chaotic or Li-Yorke chaotic if and only if $\phi(\mathbb{D})\bigcap\mathbb{T}\neq\emptyset$.

Because of $\sigma(\lambda +T)=\sigma(\bar{\lambda}+T^{*})$,
we get
$$\sigma(\lambda +T)=\sigma(M_{\phi}^{*})=\sigma(M_{\phi})\supseteq\phi(\mathbb{D}),$$
hence
$$S_{LY}(T)=S_{DC}(T)=S_{DV}(T)=S_{H}(T)=2\mathbb{D}\setminus\{0\}$$ is an open set.
\end{proof}

\begin{corollary}\label{cowendouglusjiaquanyiweisuansuanzideniyuhardykongjiandechengfasuanzi223}
If $\phi\in\mathcal{H}^{\infty}(\mathbb{D})$ is a invertible Cowen-Douglas function in the Banach algebra $\mathcal{H}^{\infty}(\mathbb{D})$,
and let $T$ be the backward shift operator on $\mathcal{L}^2(\mathbb{N})$,
%$T(x_1,x_2,\cdots)$
%$=(x_2,x_3,\cdots)$.
Then $\phi(T)$ has property $\alpha$ if and only if $\phi^{-1}(T)$ has,
where $\alpha\in$
\{Devaney chaotic, distributionally chaotic, strong mixing, Li-Yorke chaotic, hypercyclic\}.
\end{corollary}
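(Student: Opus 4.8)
The plan is to identify $\phi(T)$ with an adjoint multiplier on $\mathcal{H}^{2}(\mathbb{D})$ and then quote Corollary~\ref{cowendouglussuanzideniyuhardykongjiandechengfasuanzi23}. First I would use the natural isomorphisms $\mathcal{L}^{2}(\mathbb{N})=\mathcal{H}^{2}(\mathbb{T})\cong\mathcal{H}^{2}(\mathbb{D})$ from Example~\ref{shuzhipukeyishikaiji5}; under them the backward shift $T$ becomes $M_{z}^{*}$. Evaluating the $\mathcal{H}^{\infty}(\mathbb{D})$ functional calculus (an algebra homomorphism into $\mathcal{B}(\mathcal{H}^{2}(\mathbb{D}))$, as in Example~\ref{shuzhipukeyishikaiji5} where the conjugate $\bar\lambda$ already appears) on $M_{z}^{*}$ gives $\phi(T)=M_{\psi}^{*}$, where $\psi(z)=\overline{\phi(\overline z)}$ is the ``conjugate-coefficient companion'' of $\phi$, i.e.\ the function whose Taylor coefficients are the conjugates of those of $\phi$. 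Since $\phi\mapsto\psi$ is a conjugate-linear, multiplicative bijection of $\mathcal{H}^{\infty}(\mathbb{D})$ onto itself, $\psi$ is again invertible in $\mathcal{H}^{\infty}(\mathbb{D})$ with $\psi^{-1}(z)=\overline{\phi^{-1}(\overline z)}$, and the same identity yields $\phi^{-1}(T)=M_{\psi^{-1}}^{*}=(M_{\psi}^{*})^{-1}$.

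Next I would verify that $\psi$ is a Cowen-Douglas function in the sense of Definition~\ref{youjiejiexihanshuliCowenDouglashanshudingyi47}, so that Corollary~\ref{cowendouglussuanzideniyuhardykongjiandechengfasuanzi23} is applicable to $\psi$. Let $J$ be the conjugate-linear isometry of $\mathcal{H}^{2}(\mathbb{D})$ that conjugates Taylor coefficients; then $J^{2}=I$ and $JM_{\phi}J=M_{\psi}$, hence $JM_{\phi}^{*}J=(JM_{\phi}J)^{*}=M_{\psi}^{*}$. A routine check shows that conjugation by such an antiunitary carries $\mathcal{B}_{n}(\Omega)$ onto $\mathcal{B}_{n}(\overline{\Omega})$: it sends $\sigma(A)$, $\operatorname{ran}(A-\omega)$ and $\ker(A-\omega)$ to $\overline{\sigma(A)}$, $J\bigl(\operatorname{ran}(A-\overline\mu)\bigr)$ and $J\bigl(\ker(A-\overline\mu)\bigr)$, thereby preserving surjectivity for the shifted operators, the dimensions of the kernels, and the closed span of all the eigenspaces. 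Since $\phi$ is a Cowen-Douglas function we have $M_{\phi}^{*}\in\mathcal{B}_{n}(\bar\phi(\mathbb{D}))$, so $M_{\psi}^{*}\in\mathcal{B}_{n}(\overline{\bar\phi(\mathbb{D})})$; and $\overline{\bar\phi(\mathbb{D})}=\phi(\mathbb{D})=\bar\psi(\mathbb{D})$, so $M_{\psi}^{*}\in\mathcal{B}_{n}(\bar\psi(\mathbb{D}))$, i.e.\ $\psi$ is an invertible Cowen-Douglas function.

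Finally, Corollary~\ref{cowendouglussuanzideniyuhardykongjiandechengfasuanzi23} applied to $\psi$ gives that $M_{\psi}^{*}$ has property $\alpha$ if and only if $M_{\psi}^{*-1}$ does, for every $\alpha$ in the stated list; pulling this back through the isometric (hence unitary-equivalence) isomorphism $\mathcal{L}^{2}(\mathbb{N})\cong\mathcal{H}^{2}(\mathbb{D})$, which preserves each of hypercyclicity, Devaney chaos, distributional chaos, Li-Yorke chaos and strong mixing, shows that $\phi(T)=M_{\psi}^{*}$ has $\alpha$ iff $\phi^{-1}(T)=M_{\psi}^{*-1}$ does. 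I expect the middle step to be the main obstacle: one must be sure that replacing $\phi$ by its conjugate-coefficient companion $\psi$ really preserves membership in $\mathcal{B}_{n}$, with the spectral set conjugated correctly. An alternative that avoids the antiunitary is to observe that $\psi$ is again an $m$-folder analytic function whose rooter functions are the conjugate-coefficient companions of those of $\phi$, hence still outer (equivalently, still invertible in $\mathcal{H}^{\infty}(\mathbb{D})$), and then invoke Theorem~\ref{cowendouglussuanziyujiexichengfasuanzijigezhonghundundeguanxi24} and Corollary~\ref{nyejiexihanshushicowendouglashanshudetiaojianqigenhanshukeni52}; but since ``Cowen-Douglas function'' in Definition~\ref{youjiejiexihanshuliCowenDouglashanshudingyi47} is not a priori of that form, the antiunitary argument is the cleaner route.
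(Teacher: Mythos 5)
The paper states this corollary without proof, offering it as an immediate consequence of Corollary~\ref{cowendouglussuanzideniyuhardykongjiandechengfasuanzi23} via the identification worked out in Example~\ref{shuzhipukeyishikaiji5}; your argument follows exactly that intended route and is correct. The one point you add --- that $\phi(T)$ corresponds to $M_{\psi}^{*}$ for the conjugate-coefficient companion $\psi(z)=\overline{\phi(\bar z)}$ rather than to $M_{\phi}^{*}$ itself, so one must check via the antiunitary $J$ that $\psi$ is still an invertible Cowen--Douglas function with $\bar\psi(\mathbb{D})=\phi(\mathbb{D})$ --- is a genuine detail the paper glosses over, and your handling of it is sound.
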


\begin{corollary}
Let $T$ be the backward shift operator on $\mathcal{L}^2(\mathbb{N})$,
%$T(x_1,x_2,\cdots)$
%$=(x_2,x_3,\cdots)$.
%For $\lambda\neq0,a\neq0,\lambda,a\in\mathbb{C}$ and $n\in\mathbb{N}$,
%if $\lambda+aT^n$ is a invertible bounded linear operator,
For any given $n\in\mathbb{N}$,
if non-constant $p_n(T)=\sum\limits_{k=0}^{n}a_kT^k$ is invertible,
then $p_n(T)$ has property $\alpha$ if and only if $(p_n(T))^{-1}$ has,
where $\alpha\in$
\{Devaney chaotic, distributionally chaotic, strong mixing, Li-Yorke chaotic, hypercyclic\}.
\end{corollary}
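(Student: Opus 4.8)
The plan is to reduce this final corollary to Corollary~\ref{cowendouglusjiaquanyiweisuansuanzideniyuhardykongjiandechengfasuanzi223} by showing that a non-constant polynomial $p_n(z)=\sum_{k=0}^{n}a_kz^k$ which is invertible in the Banach algebra $\mathcal{H}^\infty(\mathbb{D})$ is precisely a non-constant invertible Cowen-Douglas function, and that the operator $p_n(T)$ obtained by substituting the backward shift $T$ on $\mathcal{L}^2(\mathbb{N})$ into $p_n$ is, under the natural isomorphism $\mathcal{L}^2(\mathbb{N})\cong\mathcal{H}^2(\mathbb{T})\cong\mathcal{H}^2(\mathbb{D})$, unitarily equivalent to $M_{p_n}^*$. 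Once both identifications are in place, the assertion is a verbatim instance of the preceding corollary, and there is nothing further to prove.

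First I would recall from Corollary~\ref{nyejiexihanshushicowendouglashanshudetiaojianqigenhanshukeni52} that every non-constant polynomial $p_n(z)\in\mathcal{H}^\infty(\mathbb{D})$ is a Cowen-Douglas function; this is already stated there and uses Theorem~\ref{cowendouglussuanziyujiexichengfasuanzijigezhonghundundeguanxi24}. It remains to observe that the hypothesis ``$p_n(T)$ is invertible'' translates into ``$p_n$ is invertible in $\mathcal{H}^\infty(\mathbb{D})$.'' Indeed, under the identification $T^*\cong M_z$ on $\mathcal{H}^2(\mathbb{T})$ (the backward shift is the adjoint of the forward shift, which is multiplication by $z$), functional calculus gives $p_n(T)^*=\overline{p_n}(T^*)$ realized as $M_{\overline{p_n}}$; equivalently $p_n(T)$ is naturally equivalent to $M_{p_n}^*$ on $\mathcal{H}^2(\mathbb{D})$, exactly as in the proof of Example~\ref{shuzhipukeyishikaiji5} for the linear case $\phi(z)=\bar\lambda+z$. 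Then $p_n(T)$ is invertible on $\mathcal{L}^2(\mathbb{N})$ if and only if $M_{p_n}$ is invertible on $\mathcal{H}^2(\mathbb{D})$, and by the theory of Toeplitz/multiplication operators with bounded analytic symbol this happens if and only if $p_n$ is bounded below on $\mathbb{D}$, i.e. $1/p_n\in\mathcal{H}^\infty(\mathbb{D})$, i.e. $p_n$ is invertible in the Banach algebra $\mathcal{H}^\infty(\mathbb{D})$. (Concretely, for a polynomial this just says $p_n$ has no zeros in the closed disk $\mathrm{cl}\,\mathbb{D}$.)

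With $p_n$ identified as an invertible Cowen-Douglas function and $p_n(T)$ identified with $M_{p_n}^*$, I would apply Corollary~\ref{cowendouglusjiaquanyiweisuansuanzideniyuhardykongjiandechengfasuanzi223} directly: that corollary asserts $\phi(T)$ has property $\alpha$ iff $\phi^{-1}(T)$ does, for $\alpha$ among Devaney chaos, distributional chaos, strong mixing, Li-Yorke chaos, and hypercyclicity. Taking $\phi=p_n$ yields that $p_n(T)$ has property $\alpha$ iff $p_n^{-1}(T)=(p_n(T))^{-1}$ has it, which is the claim, since under the functional calculus $(p_n(T))^{-1}$ is exactly the operator associated with the symbol $1/p_n$.

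The only genuine point requiring care — the ``hard part,'' though it is standard — is the invertibility dictionary: verifying that invertibility of $p_n(T)$ as a Hilbert-space operator is equivalent to invertibility of the symbol $p_n$ in $\mathcal{H}^\infty(\mathbb{D})$, and hence (via Lemma~\ref{waihanshukeyidexingzhi49}) to the rooter function of $p_n$ being invertible in $\mathcal{H}^\infty(\mathbb{D})$, so that the hypotheses of Corollary~\ref{cowendouglusjiaquanyiweisuansuanzideniyuhardykongjiandechengfasuanzi223} are genuinely met. Everything else is bookkeeping through the chain of natural isomorphisms already used repeatedly in this paper.
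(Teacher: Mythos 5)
Your proposal is correct and matches the paper's intent exactly: the paper states this corollary without proof as an immediate specialization of Corollary~\ref{cowendouglusjiaquanyiweisuansuanzideniyuhardykongjiandechengfasuanzi223} to $\phi=p_n$, using Corollary~\ref{nyejiexihanshushicowendouglashanshudetiaojianqigenhanshukeni52} to see that a non-constant polynomial is a Cowen-Douglas function and the identification $p_n(T)\cong M_{p_n}^{*}$ already set up in Example~\ref{shuzhipukeyishikaiji5}. Your explicit verification that invertibility of the operator $p_n(T)$ is equivalent to invertibility of the symbol $p_n$ in $\mathcal{H}^{\infty}(\mathbb{D})$ is a detail the paper leaves implicit, and it is the right (and correct) thing to check.
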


\section{Acknowledgments}
The author would like to thank the referee for his/her careful reading of the paper and helpful comments and suggestions. Also, the author thanks Professor Bingzhe Hou for helpful discussions on theorem~\ref{hardykongjianshangchengfasuanziyuyuanzhoujiaofeikongdedingli17}
and thanks Puyu Cui for helpful techniques (1) of theorem~\ref{cowendouglussuanziyujiexichengfasuanzijigezhonghundundeguanxi24}.

\bibliographystyle{amsplain}

\end{document}